\definecolor{webgreen}{rgb}{0,.5,0}
\definecolor{webbrown}{rgb}{.6,0,0}
\newcommand{\seqnum}[1]{\href{https://oeis.org/#1}{\rm \underline{#1}}}
\begin{document}

\theoremstyle{plain}
\newtheorem{theorem}{Theorem}
\newtheorem{corollary}[theorem]{Corollary}
\newtheorem{lemma}[theorem]{Lemma}
\newtheorem{proposition}[theorem]{Proposition}

\theoremstyle{definition}
\newtheorem{definition}[theorem]{Definition}
\newtheorem{example}[theorem]{Example}
\newtheorem{conjecture}[theorem]{Conjecture}
\newtheorem{question}[theorem]{Question}

\theoremstyle{remark}
\newtheorem{remark}[theorem]{Remark}

\begin{center}
\vskip 1cm{\LARGE\bf 
Counting Bubbles in Linear Chord Diagrams
}
\vskip 1cm 
\large
Donovan Young\\
St Albans, Hertfordshire\\
United Kingdom\\
\href{mailto:donovan.m.young@gmail.com}{\tt donovan.m.young@gmail.com} \\
\end{center}

\vskip .2 in

\begin{abstract}
In a linear chord diagram a short chord is one which joins adjacent
vertices. We define a bubble to be a region in a linear chord diagram
devoid of short chords. We derive a formal generating function
counting bubbles by their size and find an exact result for the mean
bubble size. We find that once one discards diagrams which have no
short chords at all, the distribution of bubble sizes is given by a
smooth function in the limit of long diagrams. Using a summation over
short chords, the exact form of this asymptotic distribution is found.
\end{abstract}

\section{Introduction and basic notions}

A linear chord diagram consists of a linear arrangement of $2n$
vertices. Each vertex is joined to exactly one different vertex by an
unoriented arc called a {\it chord}. Hence every linear
chord diagram on $2n$ vertices has exactly $n$ chords. As the chords
are distinguished only by the positions of their endpoints, it is
evident that there are $(2n-1)(2n-3)\cdots 1 = (2n-1)!!$ different
linear chord diagrams on $2n$ vertices.

One interesting way of refining this counting\footnote{The
  combinatorics of linear chord diagrams has a long history beginning
  with Touchard \cite{T} and Riordan's \cite{R} studies of the number of
  chord crossings; cf.\ Pilaud and Ru\'{e} \cite{PR} for a modern
  approach and further developments. Krasko and Omelchenko \cite{KO}
  provide a more complete list of references.} is by the number of
so-called {\it short} chords, i.e., chords which join adjacent
vertices. Kreweras and Poupard~\cite{KP} provided recurrence relations
and closed form expressions for the number of diagrams with exactly
$\ell$ short chords. They also showed that the mean number of short
chords is $1$, which implies that the total number of short chords is
equinumerous with the total number of linear chord diagrams,
cf.\ \cite{CK}. Kreweras and Poupard~\cite{KP} showed further that all
higher factorial moments of the distribution approach $1$ in the
$n\to\infty$ limit, thus establishing the Poisson nature of the
asymptotic distribution.

\begin{figure}[H]
\begin{center}
\includegraphics[bb=0 0 320 56, width=3.5in]{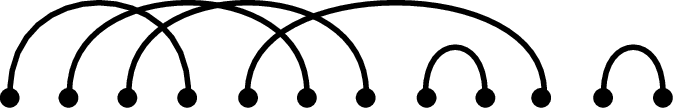}
\end{center}
\caption{A linear chord diagram on 12 vertices consisting of one
  bubble of size 1 (bounded by the two short chords) and another bubble of
  size 7 (bounded by the start of the diagram and one of the short
  chords).}
\label{f}
\end{figure}

In this paper we will be concerned with counting certain sets of
adjacent vertices of a linear chord diagram. We call these sets {\it
  bubbles}. The vertices of a given bubble may be joined by chords to
one another or to vertices outside, but (in either case) never via a
short chord. A bubble is therefore bounded either by short chords or
by the ends of the diagram, see Figure \ref{f}. The {\it size} of a
bubble is defined as the number of vertices it has, and may
generically take values from 1 to $2n-2$, or, in the case of a linear
chord diagram devoid of short chords, $2n$.

Let the total number of bubbles of size $p$ found among linear chord
diagrams on $2n$ vertices be given by $B_{n,p}$. Table \ref{TabBnp} shows
the values of $B_{n,p}$ for $n\leq 6$.

\begin{table}[H]
\begin{center}
  \begin{tabular}{c|llllllllllll}
  $n$ \textbackslash$p$& 1& 2& 3& 4& 5& 6& 7& 8& 9& 10& 11& 12\\
  \hline  
1& 0& 0\\
2& 2& 0& 0& 1\\
3& 8& 4& 2& 2& 0& 5\\
4& 42& 30& 20& 15& 12& 10& 0& 36\\
5& 300& 240& 186& 147& 120& 99& 82& 72& 0& 329\\
6& 2730& 2310& 1920& 1605& 1356& 1155& 988& 848& 730& 658& 0& 3655\\
\end{tabular}
\end{center}
\caption{Total number $B_{n,p}$ of bubbles of size $p$ counted across
  all possible linear chord diagrams with $n$ chords, On-line
  Encyclopedia of Integer Sequences \seqnum{A367000}. The last entries
  in each row are given by \seqnum{A278990}.}
\label{TabBnp}
\end{table}

\noindent There are several interesting patterns to note in Table
\ref{TabBnp}. First of all, we note that since a single diagram can
consist of many bubbles, the row-sums are generically greater than the
total number, $(2n-1)!!$, of linear chord diagrams. Secondly, the
final entries in each row are the number of configurations consisting
of a single bubble whose size is the length of the diagram; these are
therefore linear chord diagrams devoid of short chords. These
configurations have been studied elsewhere, for example by Kreweras
and Poupard~\cite{KP}, and the On-line Encyclopedia of Integer
Sequences entry \seqnum{A278990} gives these numbers as part of the
greater sequence $d_{n,s}$ (\seqnum{A079267}) of linear chord diagrams
refined by the number $s$ of short chords. We may state therefore that
$B_{n,2n}=d_{n,0}$. Thirdly, a bubble of size $2n-1$ can not be formed
(as a short chord occupies two vertices), therefore the penultimate
entry in each row is naturally zero: $B_{n,2n-1}=0$. Fourthly, the
third-from-last entry in each row is double the last entry of the row
above. This is because a bubble of size $2n-2$ can be formed in only
two ways: by placing a short chord at either end of the diagram, thus
producing a linear chord diagram devoid of short chords on two fewer
vertices. Hence $B_{n,2n-2}=2B_{n-1,2n-2}$. One may try to continue
this logic, for example, to the fourth-from-last entry. Bubbles of
size $2n-3$ are formed by a short chord positioned one vertex away
from either end of the diagram. There is therefore a chord which
connects the first (or last) vertex of the diagram with a vertex
within the bubble. We may therefore construct the bubble by starting
with configurations devoid of short chords of length $2n-4$ and
inserting this vertex in one of the $2n-3$ gaps between existing
vertices, or start with configurations of length $2n-4$ containing a
single short chord and place this vertex within it. We therefore have
the following relation:
\begin{equation}\nonumber
B_{n,2n-3} = 2 \left( (2n-3)B_{n-2,2n-4} + d_{n-2,1} \right).
\end{equation}
There will be similar relations as we move further in to each row of
the table; they will, however, become increasingly complex.

\section{Enumeration of bubbles}
\subsection{Matching polynomial method}
The present author \cite{DY3} developed a technique for computing
generating functions which count linear chord diagrams refined by
short chords. As we will use this method in the following subsection
to enumerate bubbles, we give an account of the method here first. The
method centres upon the matching (or rook) polynomial for the path of
length $2n$. We remind the reader that the matching polynomial
$m_G(z)=\sum\rho_j z^{j}$ of a graph $G$ has coefficients $\rho_j$
which count the number of $j$-edge matchings on $G$. By convention the
number $\rho_0$ of zero-edge matchings is defined to be $1$ for every
graph. For example, the path consisting of the four vertices $A$, $B$,
$C$, and $D$, has three $1$-edge matchings: $(AB)$, $(BC)$, and
$(CD)$. It has one $2$-edge matching: $(AB)(CD)$. The matching
polynomial for the path of length four is therefore $1+3z+z^2$.

The number $d_{n,0}$ of linear chord diagrams on $2n$ vertices devoid of short
chords may be calculated from the $\rho_j$ associated with the path of
length $2n$ via inclusion-exclusion:
\begin{equation}\label{incexc}
  d_{n,0}=\sum_{j=0}^{n} (-1)^j(2n-2j-1)!! \,\rho_j.
\end{equation}
The explanation is as follows. We note that for each of the $\rho_j$
choices of $j$ edges on which to place $j$ short chords, there remains
$(2n-2j-1)!!$ configurations on the remaining $2n-2j$ vertices. There
will be some number of configurations with exactly $q$ short chords
among these $(2n-2j-1)!!$. Then $(2n-2j-1)!!\,\rho_j$ counts the
number $d_{n,q+j}$ of $(q+j)$-short-chord configurations ${q+j\choose
  j}$ times. We thus have
\begin{equation}\nonumber
\sum_{j=0}^{n} (-1)^j(2n-2j-1)!! \,\rho_j =\sum_{j=0}^{n} (-1)^j
\sum_{q=0}^{n-j} {q+j\choose j} d_{n,q+j}$$
$$=d_{n,0} + \sum_{q+j=1}^n d_{n,q+j} \sum_{j=0}^{q+j} (-1)^j
    {q+j\choose j},
\end{equation}
and so all but the $0$-short-chord configurations cancel.

The alternating sum in Equation (\ref{incexc}) may be repackaged as
an integral involving the matching polynomial. It is more convenient to
collect the matching polynomials for all paths together into a
two-variable generating function, cf.\ \cite[Proposition 15]{DY3}:
\begin{equation}\nonumber
L(x,y) = \frac{1}{1-y(1+x^2y)},
\end{equation}
where $[y^{2n}]L(x,y)$ is the matching polynomial $m(x^2)$ for the
path consisting of $2n$ vertices; the replacement $z\to x^2$ is a
useful redefinition for the calculations which follow.

The generating function for the $d_{n,0}$ is then given by
\begin{equation}\nonumber
\sum_n d_{n,0}\, y^{2n}=
  \int_0^\infty dt \,e^{-t}\,\frac{1}{2\pi i}
    \oint_{|x|=\epsilon} \frac{dx}{x}\, e^{x^2/2} \,
         L\left(ix/t,yt/x \right).
\end{equation}
The explanation of this equality is as follows. A generic term in the
expansion of $L\left(ix/t,yt/x \right)$ will have the form
\begin{equation}\nonumber
\left(-\frac{x^2}{t^2}\right)^{j}\left(\frac{y^2t^2}{x^2}\right)^{n}\rho_j,
\end{equation}
where odd powers of $y$, though present in $L(x,y)$, do not survive
the contour integration over $x$, and have thus been omitted. The only
term in the expansion of the exponential $\exp(x^2/2)$ surviving the
contour integration will be $x^{2(n-j)}/(2^{n-j}(n-j)!)$, as this will
absorb the $j-n$ powers of $x^2$ in the expansion of $L\left(ix/t,yt/x
\right)$. Finally, the integration over $t$ provides a factor of
$(2n-2j)!$. We therefore have that
\begin{equation}\nonumber
  \int_0^\infty dt \,e^{-t}\,\frac{1}{2\pi i}
    \oint_{|x|=\epsilon} \frac{dx}{x}\, e^{x^2/2} \,
    L\left(ix/t,yt/x \right) =
    \sum_{j=0}^n (-1)^j\frac{(2n-2j)!}{2^{n-j}(n-j)!}\,\rho_j,
\end{equation}
which is Equation (\ref{incexc}).

\subsection{Application of the method to the enumeration of bubbles}
A bubble is bounded to the left and right by short chords (or the ends
of the diagram), thus we define the following generating function:
\begin{equation}\nonumber
  {\cal L}(r,w,x,y)=\frac{1}{1-y^2L(x,ry)}
  \Bigl(L(x,wry)-1\Bigr)
    \frac{1}{1-y^2L(x,ry)}.
\end{equation}
The central factor $L(x,wry)-1$ corresponds to the bubble being
counted, whereas the two factors of $(1-y^2L(x,ry))^{-1}$ correspond
to the remainder of the diagram, to the left and to the right of the
bubble; the presence of $y^2$ corresponds to the short chords bounding
the bubble. In the expansion of ${\cal L}(r,w,x,y)$, each power of $y$
corresponds to a vertex of the linear chord diagram. Those vertices
belonging to the bubble under consideration are further labelled with
a power of $w$, while every vertex not part of a short chord is also
labelled with a power of $r$. Armed with this aggregate generating
function ${\cal L}(r,w,x,y)$ for the matching polynomial, we calculate
the generating function which counts bubbles as follows. Let $\rho_j$
be the number of $j$-edge matchings on the vertices marked with $r$
and let $2q$ be the number of these vertices, then
\begin{equation}\label{BR}
  \sum_{j=0}^q (-1)^j\,(2(q-j)-1)!!\,\rho_j =
  \sum_{j=0}^q (-1)^j\frac{(2(q-j))!}{2^{q-j}(q-j)!}\,\rho_j
\end{equation}
counts the number of configurations with no short chords on the $2q$
vertices in question.

\begin{theorem}\label{ThmInt}
  The generating function which counts the numbers $B_{n,p}$ is given by
\begin{equation}\nonumber
B(y^2,w)=\sum_{n,p} B_{n,p}\, y^{2n} w^p =
  \int_0^\infty dt \,e^{-t}\,\frac{1}{2\pi i}
    \oint_{|x|=\epsilon} \frac{dx}{x}\, e^{x^2/2} \,
    {\cal L}\left(t/x,w,ix/t,y \right).
\end{equation}
\end{theorem}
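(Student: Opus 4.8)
The plan is to show that the operator $\int_0^\infty dt\,e^{-t}\,\frac{1}{2\pi i}\oint_{|x|=\epsilon}\frac{dx}{x}\,e^{x^2/2}$ acting on $\mathcal{L}(t/x,w,ix/t,y)$ implements the inclusion--exclusion (\ref{BR}) \emph{simultaneously} on all vertices marked by $r$, and that the combinatorial object encoded by $\mathcal{L}$ is in bijection with pairs (diagram, distinguished bubble). First I would expand $\mathcal{L}(r,w,x,y)$ by reading each geometric factor $\tfrac{1}{1-y^2L(x,ry)}=\sum_k\bigl(y^2L(x,ry)\bigr)^k$ as an alternating sequence of bounding short chords (each $y^2$) and intervening gaps (each $L(x,ry)$, of size $\geq 0$), with the central factor $L(x,wry)-1$ the distinguished bubble (size $\geq 1$, marked by $w$). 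Collecting terms of a fixed \emph{skeleton} $\sigma$---a placement of $s$ short chords together with a designated gap of size $p\geq 1$---the powers of $y$, $r$, $w$ factor out as $y^{2n}r^{2q}w^p$, recording respectively the total number of vertices, the number $2q=2n-2s$ of vertices lying in no short chord, and the distinguished bubble size; what remains is the $x$-dependence $\prod_{g}m_{|g|}(x^2)=\sum_J\rho^\sigma_J\,x^{2J}$, the product over gaps $g$ of the matching polynomials $m_{|g|}$ of the paths on $|g|$ vertices, which is exactly the matching polynomial of the disjoint union of the gap-paths.

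I would then perform the two integrations exactly as in the derivation of the generating function for $d_{n,0}$. Under $r\mapsto t/x$ together with $x\mapsto ix/t$ each gap factor becomes $L(x,ry)\mapsto L(ix/t,yt/x)$, the very kernel analysed there, so that $r^{2q}\mapsto t^{2q}x^{-2q}$ and $\rho^\sigma_J x^{2J}\mapsto(-1)^J\rho^\sigma_J x^{2J}t^{-2J}$. The contour integral against $e^{x^2/2}=\sum_\ell x^{2\ell}/(2^\ell\ell!)$ retains only the term with $\ell=q-J$ (total $x$-power zero), supplying the factor $1/(2^{q-J}(q-J)!)$, and the subsequent $\int_0^\infty e^{-t}t^{2q-2J}\,dt=(2q-2J)!$ converts the surviving power of $t$ into a factorial. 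Using $(2m)!/(2^m m!)=(2m-1)!!$ with $m=q-J$, the operator therefore sends the $x$-dependence of the skeleton $\sigma$ to
\begin{equation}\nonumber
\sum_{J=0}^{q}(-1)^J\,(2(q-J)-1)!!\,\rho^\sigma_J,
\end{equation}
which is precisely the left-hand side of (\ref{BR}) for the $2q$ vertices marked by $r$. A skeleton with an odd number of $r$-marked vertices makes $r^{2q}$ an odd power of $x^{-1}$, for which the total $x$-power can never vanish; such terms are annihilated by the contour integral, exactly as the odd powers of $y$ were discarded in the $d_{n,0}$ computation, so only genuinely completable skeletons contribute.

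By (\ref{BR}) this sum equals the number of ways to complete the $2q$ gap-vertices of $\sigma$ into a configuration with no short chords. The one point requiring care is that this is the correct completion count for a full diagram: a long chord of the completion may join vertices in two \emph{different} gaps, but since any two gaps are separated by a bounding short chord such a chord can never itself be short, so the sole constraint is the absence of short chords \emph{within} each gap---and these intra-gap matchings are exactly the $\rho^\sigma_J$ produced above. Summing over all skeletons $\sigma$ of total size $2n$ with distinguished bubble size $p$ then counts each pair (diagram, bubble of size $p$) exactly once (the short chords and the chosen gap determine $\sigma$, and the remaining chords form the completion), which yields $\sum_{n,p}B_{n,p}\,y^{2n}w^p$ and proves the theorem. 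I expect the main obstacle to be the bookkeeping of this simultaneous inclusion--exclusion: verifying that collecting the product of per-gap matching polynomials reproduces precisely the matching polynomial $\rho^\sigma_J$ of the \emph{combined} $r$-vertex set, and that although the shared variables $t$ and $x$ couple all the gaps under the integral, the net effect is a single alternating sum (\ref{BR}) over the total count $2q$---allowing cross-gap long chords and odd individual gap sizes---rather than an erroneous product of per-gap sums.
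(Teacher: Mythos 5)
Your proposal is correct and follows essentially the same route as the paper: expand $\mathcal{L}$ into terms of the form $(t^2/x^2)^q w^p(-x^2/t^2)^J y^{2n}\rho_J$ and check that the contour integral (which also kills odd powers) and the $t$-integral together supply the factor $(-1)^J(2(q-J)-1)!!$ demanded by Equation (\ref{BR}). You supply more of the combinatorial bookkeeping than the paper does --- the skeleton decomposition, the product of per-gap matching polynomials, and the observation that cross-gap chords are never short --- but these only flesh out the argument the paper sketches in its preamble and proof; there is no gap.
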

\begin{proof}
A generic term in the expansion of ${\cal L}\left(t/x,w,ix/t,y
\right)$ (which survives the contour integration) will have the form
  \begin{equation}\nonumber
    \left(\frac{t^2}{x^2}\right)^{q}w^p
    \left(-\frac{x^2}{t^2}\right)^{j} y^{2n} \rho_j,
  \end{equation}
  where $\rho_j$ are the aforementioned matching numbers. The third
  factor follows from the fact that $L(x,y)$ is an even function of
  $x$.  The expansion of the exponential in $x^2$ will contribute only
  the term of order $x^{2(q-j)}$, as the contour integration in $x$
  will eliminate all other terms. This mechanism also forces the power
  of $r$ to be even, and accounts for the even power of $t/x$ in the
  first factor. We note that
\begin{equation}\nonumber
[x^{2(q-j)}]e^{x^2/2} = \frac{1}{2^{q-j} (q-j)!},    
\end{equation}
and that
\begin{equation}\nonumber
\int_0^\infty dt\, e^{-t} \,\frac{(t^2)^q}{(-t^2)^j} = (-1)^j\,(2(q-j))!,
\end{equation}
and so together they give the factor $(-1)^j\,(2(q-j)-1)!!$ as required by
Equation (\ref{BR}).
\end{proof}

\begin{corollary}\label{CorGF}
The generating function $B(y,w)$ is given by
\begin{align}\nonumber
  &B(y,w)=\int_0^\infty dt \,e^{-t}\Biggl(
  \frac{(1-w)^2(1-wy)^2}{(1+w^2y)\left(1-w(1-wy)\right)^2}
\exp \frac{y}{2}\left(\frac{tw}{1+w^2y}\right)^2\\\nonumber
&+\left(\frac{2 - w(y + 2)\left(1+y(1-w(2-wy))\right)}
{\left(1-w(1-wy)\right)^2}wy
+\frac{1-wy}{1-w(1-wy)} t^2wy^3 \right)\exp\frac{t^2y}{2}
\Biggr)
\end{align}
\end{corollary}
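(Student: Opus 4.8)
The plan is to evaluate the double integral of Theorem~\ref{ThmInt} by performing the contour integral in $x$ in closed form while leaving the $t$-integral untouched; the result is then of exactly the shape claimed. First I would substitute the four arguments into $\mathcal L$ and simplify. Because $L$ enters only through the square of its first slot, the identity $(ix/t)^2(t/x)^2=-1$ collapses each factor: with $r=t/x$ one gets $L(ix/t,ry)=1/(1-ty/x+y^2)$ and $L(ix/t,wry)=1/(1-wty/x+w^2y^2)$, so that $1-y^2L(ix/t,ry)=(1-ty/x)/(1-ty/x+y^2)$ and $L(ix/t,wry)-1=(wty/x-w^2y^2)/(1-wty/x+w^2y^2)$. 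Clearing powers of $x$ turns $\mathcal L(t/x,w,ix/t,y)$ into a ratio of cubics in $x$ which is regular at $x=0$ (with value $-1$) and whose only finite poles are a double pole at $x=ty$ and a simple pole at $x_\ast=wty/(1+w^2y^2)$. It is convenient to run the whole computation in the variable of Theorem~\ref{ThmInt} and to replace $y^2$ by $y$ only at the very end, which is precisely the passage from $B(y^2,w)$ to the corollary's $B(y,w)$.

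Next I would convert the contour integral into residues. The surviving terms of $\mathcal L$ carry only non-positive powers of $x$, so $\frac{1}{2\pi i}\oint\frac{dx}{x}e^{x^2/2}\mathcal L$ is the constant term of $e^{x^2/2}\mathcal L$ in the Laurent expansion valid for $|x|$ beyond both poles. Writing this as
\[ g(t,w,y)=\sum_{m\ge 0}\frac{1}{2^m m!}\,[x^{-2m}]\,\mathcal L \]
and interchanging the uniformly convergent sum $\sum_m x^{2m}/(2^m m!)=e^{x^2/2}$ with the coefficient extraction, I would recast $g$ as the sum of the residues of $x^{-1}e^{x^2/2}\mathcal L$ at $x=ty$ and $x=x_\ast$, plus a constant produced by the values of $\mathcal L$ at $0$ and at $\infty$. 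Keeping track of this constant is the one genuinely delicate bookkeeping point, since the $t$-independent pieces of the two residues must be combined with $\mathcal L(\infty)$ and shown to assemble into a single constant rather than spurious rational terms.

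The two residues then supply the two lines of the statement. The simple pole at $x_\ast$ gives a rational prefactor times $e^{x_\ast^2/2}$, and since $x_\ast^2/2$ becomes $\tfrac{y}{2}\bigl(tw/(1+w^2y)\bigr)^2$ after $y^2\mapsto y$, this is the first line. For the double pole I would write $\mathcal L=P(x)/(x-ty)^2$ and differentiate $x^{-1}e^{x^2/2}P(x)$ at $x=ty$. In the term where the derivative hits $x^{-1}e^{x^2/2}$, the part proportional to $e^{x^2/2}$ multiplies $P(ty)$, and the evaluation $P(ty)=wt^2y^3(1-wy)/(1-w+w^2y)$ reproduces exactly the explicit $t^2$-term $\tfrac{1-wy}{1-w(1-wy)}t^2wy^3$ after using $1-w(1-wy)=1-w+w^2y$; the remaining pieces (from $P'(ty)$ and from the $-x^{-2}$ part of the derivative) are $t$-independent and, after simplification, give the coefficient $\tfrac{2-w(y+2)(1+y(1-w(2-wy)))}{(1-w(1-wy))^2}wy$ multiplying $e^{t^2y/2}$. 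Because $L$ is even in its first argument, all odd powers of the old $y$ cancel and both residues are genuine functions of $y^2$, so the replacement $y^2\mapsto y$ is legitimate.

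The main obstacle is the double-pole computation together with the constant bookkeeping: the differentiation at $x=ty$ produces several $t$-independent contributions whose algebraic collapse into the compact coefficient above, and whose leftover constant must be reconciled against $\mathcal L(\infty)$, is where the bulk of the work and the only real risk of error lie. Once that is done, restoring $\int_0^\infty dt\,e^{-t}(\cdot)$ around $g(t,w,y)$ yields the stated formula for $B(y,w)$.
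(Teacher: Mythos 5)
Your proposal is correct and follows essentially the same route as the paper: rewrite $\mathcal{L}\left(t/x,w,ix/t,y\right)$ as a rational function of $x$ with a double pole at $x=ty$ and a simple pole at $x=wty/(1+w^2y^2)$, and evaluate the coefficient extraction as a sum of residues (your intermediate values, e.g., $\mathcal{L}(0)=-1$ and $P(ty)$, check out). The paper's proof simply says ``computing the residues at these poles'' and omits the justification that the formal $[x^0]$ extraction equals the residue sum over a contour enclosing both poles together with the constant contribution from $x=0$; your explicit bookkeeping of that point is a welcome addition rather than a deviation.
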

\begin{proof}
This is established by evaluating the contour integral over $x$ from
Theorem \ref{ThmInt}. We begin with a simplified expression for ${\cal
  L}\left(t/x,w,ix/t,y\right)$:
\begin{equation}\nonumber
{\cal L}\left(t/x,w,ix/t,y\right)= \frac{w y (t-w x y) (x-t y+x
  y^2)^2}{(1+w^2y^2)} \frac{1}{(x-t y)^2 \left(x-\frac{t w
    y}{1+w^2y^2}\right)}.
\end{equation}
We therefore have two poles: a simple pole at $x=twy(1+w^2y^2)^{-1}$,
and a pole of order two at $x=ty$. Computing the residues at these
poles we find, through direct calculation, the result for $B(y,w)$.
\end{proof}

\begin{lemma}\label{LemTot}
  The total number of bubbles, counted across all linear chord
  diagrams on $2n$ vertices is given by
\begin{equation}\nonumber
  \sum_{p=1}^{2n} B_{n,p} = 
  \frac{(2n-2)!(4n-5)}{2^{n-1}(n-1)!}.
\end{equation}
\end{lemma}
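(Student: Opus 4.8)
The plan is to specialise the closed-form generating function of Corollary \ref{CorGF} to $w=1$, since setting $w=1$ collapses the $w$-grading and leaves $B(y,1)=\sum_{n}\bigl(\sum_{p}B_{n,p}\bigr)y^{n}$, so that the total number of bubbles on $2n$ vertices is the coefficient of $y^{n}$ in $B(y,1)$. The decisive simplification is that the entire first line of $B(y,w)$ carries the prefactor $(1-w)^2$ and therefore vanishes at $w=1$; what remains is a single Gaussian-type integral. First I would substitute $w=1$ into the second line. Using $1-w(1-wy)\to y$, $1+y\bigl(1-w(2-wy)\bigr)\to 1+y^2-y$, and expanding the numerator $2-(y+2)(1+y^2-y)=y(1-y-y^2)$, the bracket collapses to
\[
(1-y-y^2)+(1-y)\,y^2 t^2,
\]
so that $B(y,1)=\int_0^\infty e^{-t}\bigl[(1-y-y^2)+(1-y)y^2t^2\bigr]\exp\tfrac{t^2y}{2}\,dt$.

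Next I would treat this integral as a formal power series in $y$: expanding $\exp\frac{t^2y}{2}=\sum_{k\ge0}\frac{t^{2k}y^k}{2^k k!}$ and integrating term by term via $\int_0^\infty t^m e^{-t}\,dt=m!$ gives $\int_0^\infty e^{-t}\exp\frac{t^2y}{2}\,dt=\sum_{k\ge0}(2k-1)!!\,y^k$ and $\int_0^\infty e^{-t}t^2\exp\frac{t^2y}{2}\,dt=\sum_{k\ge0}\frac{(2k+2)!}{2^k k!}\,y^k$, where I use $\frac{(2k)!}{2^k k!}=(2k-1)!!$. Writing $s_k=(2k-1)!!$ and $t_k=\frac{(2k+2)!}{2^k k!}=2(k+1)(2k+1)!!$, the coefficient of $y^n$ in $B(y,1)$ is
\[
\bigl(s_n-s_{n-1}-s_{n-2}\bigr)+\bigl(t_{n-2}-t_{n-3}\bigr),
\]
the first bracket coming from $(1-y-y^2)$ times the first series and the second from $(y^2-y^3)$ times the second.

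Finally I would reduce everything to the common factor $(2n-5)!!$, using $(2n-1)!!=(2n-1)(2n-3)(2n-5)!!$, $(2n-3)!!=(2n-3)(2n-5)!!$, and $t_{n-2}=2(n-1)(2n-3)(2n-5)!!$, $t_{n-3}=2(n-2)(2n-5)!!$; collecting the resulting polynomial in $n$ should give $8n^2-22n+15=(4n-5)(2n-3)$, whence $\sum_p B_{n,p}=(4n-5)(2n-3)!!=\frac{(2n-2)!(4n-5)}{2^{n-1}(n-1)!}$ for $n\ge2$. I do not expect a conceptual obstacle here: once the $(1-w)^2$ term is discarded the computation is essentially a single Gaussian-moment integral, and the only care required is (i) interpreting the divergent-looking integral as a formal series in $y$ evaluated coefficientwise through the Gamma function, and (ii) making the final polynomial identity line up exactly. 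As a sanity check I would compare against the row sums of Table \ref{TabBnp} (namely $3,21,165,1575$ for $n=2,\dots,5$), which indeed match $(4n-5)(2n-3)!!$.
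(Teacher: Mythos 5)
Your proof is correct and follows essentially the same route as the paper: set $w=1$ in Corollary \ref{CorGF}, observe that the first line vanishes because of the $(1-w)^2$ prefactor, expand $\exp(t^2y/2)$ and integrate term by term against $e^{-t}$, and read off the coefficient of $y^n$. (Your coefficient $t_k=2(k+1)(2k+1)!!$ for the $t^2$-moment series is the correct one; the paper's displayed intermediate factor $2j(2j+1)$ appears to be a typo for $(2j+1)(2j+2)$, though its final answer agrees with yours.)
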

\begin{proof}
  We use Corollary \ref{CorGF} and set $w=1$:
  \begin{align}\nonumber
    B(y,1)= \sum_n y^n \sum_{p=1}^{2n} B_{n,p} &=
    \int_0^\infty dt \,e^{-t}
    \left(1 - y - (1 - t^2)y^2 - t^2 y^3\right) \exp\frac{t^2y}{2}\\\nonumber
    &= \sum_{j}
    (2j-1)!!y^{j} \left( 1-y-y^2 +2j(2j+1)y^2(1-y)\right),
  \end{align}
  where in the second line we have used the expansion of the
  exponential. Our result is then obtained by reading off the
  coefficient of $y^n$:
  \begin{equation}\nonumber
[y^n]\sum_{j}
  (2j-1)!!y^{j} \left( 1-y-y^2 +2j(2j+1)y^2(1-y)\right)
  =\frac{(2n-2)!(4n-5)}{2^{n-1}(n-1)!}.
\end{equation}
\end{proof}

\begin{lemma}\label{LemFrst}
  The un-normalized first moment of bubble size is given by
\begin{equation}\nonumber
  \sum_{p=1}^{2n} p\,B_{n,p} = \frac{(2n-1)!}{2^{n-2}(n-2)!}.
\end{equation}
\end{lemma}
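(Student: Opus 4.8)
The plan is to sidestep reopening the full generating-function machinery and instead exploit a bijective observation about where vertices can live. In any fixed linear chord diagram, a vertex either is or is not an endpoint of a short chord; if it is not, then it belongs to exactly one maximal run of consecutive non-short-chord vertices, which is precisely a bubble, and if it is, then it belongs to no bubble at all. Consequently, summing the sizes of all bubbles in a single diagram counts exactly the vertices not incident to a short chord, namely $2n-2s$, where $s$ is the number of short chords of that diagram. Summing over all $(2n-1)!!$ diagrams gives
\[
\sum_{p=1}^{2n} p\,B_{n,p} = 2n\,(2n-1)!! - 2\,S_n,
\]
where $S_n$ denotes the total number of short chords taken over all linear chord diagrams on $2n$ vertices.

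The next step is to insert the value of $S_n$. The introduction records the Kreweras--Poupard result that the mean number of short chords is exactly $1$, equivalently $S_n=(2n-1)!!$. Substituting this yields $\sum_p p\,B_{n,p}=(2n-2)(2n-1)!!$, and it remains only to rewrite this in factorial form. Using $(2n-1)!!=\tfrac{(2n-1)!}{2^{n-1}(n-1)!}$ and $2n-2=2(n-1)$, the factor $2(n-1)$ cancels against $2^{n-1}(n-1)!=2\cdot 2^{n-2}(n-1)(n-2)!$ to leave $\tfrac{(2n-1)!}{2^{n-2}(n-2)!}$, as claimed. This route makes the statement transparent: the first moment is pinned down entirely by the total vertex count and the known total short-chord count.

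As a consistency check, and to stay within the generating-function framework of the preceding results, one can instead read the answer off Corollary \ref{CorGF}. Since $B(y,w)=\sum_{n,p}B_{n,p}y^n w^p$, the first moment generating function is $\partial_w B(y,w)\big|_{w=1}$, whose coefficient of $y^n$ is the desired sum. The first summand in the integrand of Corollary \ref{CorGF} carries an explicit factor $(1-w)^2$, so both it and its first $w$-derivative vanish at $w=1$, and it therefore contributes nothing. Only the second summand survives, and because its exponential $\exp\tfrac{t^2y}{2}$ is independent of $w$, the computation reduces to differentiating the rational prefactor in $w$, evaluating at $w=1$, performing the $t$-integral via $\int_0^\infty t^{2k}e^{-t}\,dt=(2k)!$, and expanding the exponential, exactly as in the proof of Lemma \ref{LemTot}.

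I expect the only genuinely delicate points to lie in the combinatorial argument: one must ensure the decomposition into bubbles is exact for finite $n$, in particular that two adjacent short chords simply enclose an empty region contributing $0$ rather than a spurious bubble, and that the cited identity $S_n=(2n-1)!!$ is an exact finite-$n$ statement rather than merely the asymptotic mean. Both hold, but they are the steps on which the clean closed form depends. In the generating-function alternative the only obstacle is the bookkeeping of the single $w$-derivative of the rational prefactor, which the vanishing of the $(1-w)^2$ term mercifully shortens.
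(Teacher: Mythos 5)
Your proof is correct, but your primary argument is genuinely different from the paper's. The paper obtains the first moment by differentiating the generating function of Corollary \ref{CorGF} with respect to $w$, setting $w=1$, carrying out the $t$-integral term by term, and extracting the coefficient of $y^n$ --- essentially the computation you relegate to a ``consistency check.'' Your main route instead observes that the bubbles of a fixed diagram partition the vertices not incident to a short chord, so the bubble sizes within one diagram sum to $2n-2s$; summing over all diagrams and inserting the exact identity that the total number of short chords over all diagrams equals $(2n-1)!!$ (which follows from $(2n-1)$ placements of a marked short chord times $(2n-3)!!$ completions of the remaining vertices, and is the Kreweras--Poupard statement that the mean number of short chords is exactly $1$) gives $(2n-2)(2n-1)!!=\frac{(2n-1)!}{2^{n-2}(n-2)!}$. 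The two delicate points you flag are indeed the right ones and both resolve in your favor: a degenerate size-$0$ region between adjacent short chords contributes nothing to the weighted sum $\sum_{p\ge 1} p\,B_{n,p}$, and the total-short-chord count is exact at every finite $n$, not merely asymptotic. What your argument buys is brevity and transparency --- and, notably, it answers for Lemma \ref{LemFrst} the question raised in the paper's final section of whether these counts admit a direct combinatorial derivation; what the paper's approach buys is uniformity with Lemma \ref{LemTot} and the generating-function machinery that is needed anyway to access the full distribution of $B_{n,p}$.
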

\begin{proof}
We use Corollary \ref{CorGF} and take a derivative with respect to $w$:  
\begin{align}\nonumber
    \frac{\partial}{\partial w}B(y,1)&= \sum_n y^n \sum_{p=1}^{2n} p \,B_{n,p} =
    \int_0^\infty dt \,e^{-t}
    \left(t^2y(1 - 2y) - 2y\right) \exp\frac{t^2y}{2}\\\nonumber
    &=\sum_{j}
  (2j-1)!!y^{j} \left(-2y +2j(2j+1)y(1-2y)\right),
\end{align}
where we have used the expansion of the exponential before integrating
over $t$. Our result is then obtained by reading off the coefficient of $y^n$:
  \begin{equation}\nonumber
    [y^n]\sum_{j}
  (2j-1)!!y^{j} \left(-2y +2j(2j+1)y(1-2y)\right)
=\frac{(2n-1)!}{2^{n-2}(n-2)!}.
\end{equation}
\end{proof}

\begin{theorem}\label{Thmmean}
  The mean bubble size $\bar p$, taken over all bubbles on linear
  chord diagrams consisting of $2n$ vertices, is given by
  \begin{equation}\nonumber
    \bar p \equiv \frac{\sum_{p=1}^{2n} p\,B_{n,p}}
    {\sum_{p=1}^{2n} B_{n,p}}
      = \frac{2(2n-1)(n-1)}{4n-5}.
    \end{equation}
\end{theorem}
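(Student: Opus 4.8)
The plan is to form the quotient $\bar p = \bigl(\sum_{p} p\,B_{n,p}\bigr)\big/\bigl(\sum_{p} B_{n,p}\bigr)$ by substituting directly the two closed forms already established: the numerator from Lemma \ref{LemFrst} and the denominator from Lemma \ref{LemTot}. Since both of these expressions are, up to the factor $(4n-5)$ appearing in the denominator, simply a factorial divided by a power of two times another factorial, the entire computation collapses to a routine simplification of a ratio of such terms. No genuine combinatorial input remains at this stage; all of it has been absorbed into the two preceding lemmas.

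Concretely, I would first write out the ratio explicitly as
\[
\bar p = \frac{\,(2n-1)!\big/\bigl(2^{n-2}(n-2)!\bigr)\,}{\,(2n-2)!\,(4n-5)\big/\bigl(2^{n-1}(n-1)!\bigr)\,}
       = \frac{(2n-1)!\,\,2^{n-1}(n-1)!}{2^{n-2}(n-2)!\,(2n-2)!\,(4n-5)}.
\]
Next I would apply the three elementary cancellations $(2n-1)!/(2n-2)! = 2n-1$, then $2^{n-1}/2^{n-2} = 2$, and finally $(n-1)!/(n-2)! = n-1$. Collecting the surviving factors yields $2(2n-1)(n-1)/(4n-5)$, which is exactly the asserted form.

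The main ``obstacle'' here is purely one of bookkeeping rather than of ideas: one must track the factorial ratios and the powers of $2$ carefully enough that no spurious factor of $2$, of $n$, or of a linear term in $n$ is introduced or dropped when simplifying. Because the two lemmas supply the nontrivial content, the theorem follows at once upon verifying that these cancellations combine to leave precisely the numerator $2(2n-1)(n-1)$ over the denominator $4n-5$.
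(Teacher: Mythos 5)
Your proposal is correct and matches the paper's proof exactly: the paper likewise obtains Theorem \ref{Thmmean} by dividing the closed form of Lemma \ref{LemFrst} by that of Lemma \ref{LemTot}, and your cancellations $(2n-1)!/(2n-2)!=2n-1$, $2^{n-1}/2^{n-2}=2$, and $(n-1)!/(n-2)!=n-1$ are all accurate.
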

\begin{proof}
  We employ the results of Lemmas \ref{LemTot} and \ref{LemFrst}.
  \end{proof}

\section{Asymptotic distribution}
\label{Secasymp}

The values $B_{n,p}$ for $1\leq p \leq 2n-2$ follow a smooth
distribution in the limit $n\to\infty$, the form of which we shall
discover in this section. It is therefore natural to discard the
penultimate and final entries in Table \ref{TabBnp}, and to view the
configurations consisting of a single bubble of size $2n$ as part of a
different counting problem. Before we do this, however, we note that
the result of Theorem \ref{Thmmean} implies that the asymptotic value
of the mean bubble size $\bar p$ is
\begin{equation}\nonumber
\lim_{n\to \infty}   \frac{2(2n-1)(n-1)}{4n-5} = n.
\end{equation}
To see how this arises, we remind the reader that Kreweras and
Poupard~\cite{KP} proved that the number $k$ of short chords is
asymptotically Poisson distributed with mean $1$. To leading order,
the presence of $k$ short chords induces $k+1$ bubbles. This implies
\begin{equation}\label{EqPoisOne}
  \sum_{p=1}^{2n} B_{n,p} \simeq (2n-1)!!\sum_{k\geq 0} (k+1) \frac{e^{-1}}{k!}
  =2 (2n-1)!!,
\end{equation}
where $(2n-1)!!$ counts the total number of linear chord diagrams
consisting of $n$ chords. Since the distribution of the positions of
these short chords is asymptotically uniform, the mean size of a
bubble is asymptotically $2n/(k+1)$. The overall mean bubble size is
then
\begin{equation}\label{EqPoisTwo}
  \frac{\sum_{p=1}^{2n} p\,B_{n,p}}
    {\sum_{p=1}^{2n} B_{n,p}} \simeq \frac{1}{2(2n-1)!!}
  (2n-1)!!\sum_{k\geq 0} (k+1)\frac{2n}{k+1} \frac{e^{-1}}{k!}
  =n.
\end{equation}

We now turn our attention to the asymptotic distribution of bubble
sizes, not including the values $p=2n-1, 2n$. The mean of this
distribution can be obtained by subtracting the contribution of the
bubble of size $2n$ as follows:
\begin{equation}\label{asympmean}
  \frac{\sum_{p=1}^{2n-2} p\,B_{n,p}}
    {\sum_{p=1}^{2n-2} B_{n,p}}\simeq
  \frac{\sum_{p=1}^{2n} p\,B_{n,p}-2n e^{-1}(2n-1)!!}
       {\sum_{p=1}^{2n} B_{n,p}- e^{-1}(2n-1)!!}
       \simeq \frac{2 - 2e^{-1}}{2 - e^{-1}}\,n \simeq 0.7746\,n,
\end{equation}
where we have made use of Equation (\ref{EqPoisOne}) and (\ref{EqPoisTwo}).

\begin{figure}[H]
\begin{center}
\includegraphics[bb=0 0 334 124, width=5.5in]{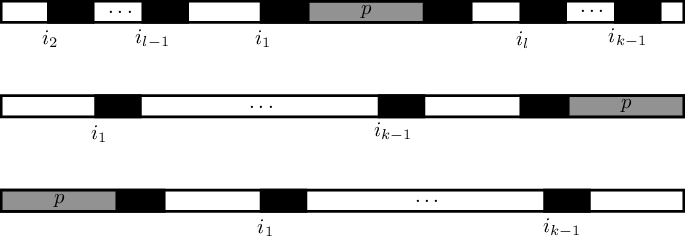}
\end{center}
\caption{Contribution of diagrams with $k$ short chords (shown in
  black) to the asymptotic enumeration of bubbles of size $p$. The
  indices $i_j$ are positions which must be summed over.}
\label{Figbubbasymp}
\end{figure}

\begin{theorem}\label{ThmMain}
Let $p$ denote the size of a bubble and let $x=p/(n-1)$. The
asymptotic distribution $\rho(x)$ of the size of bubbles, excluding
diagrams which are themselves bubbles, is given for $x\in (0,2]$ by
\begin{equation}\nonumber
  \rho(x) = \lim_{n\to\infty} \frac{(n-1)B_{n,x(n-1)}}{\sum_{p=1}^{2n-2} B_{n,p}}
  =\frac{e\,(6-x)}{2\,(4 e -2)}\,e^{-x/2}.
\end{equation}
\end{theorem}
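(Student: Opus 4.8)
The plan is to count bubbles of size $p$ directly, organizing the sum by the position of the bubble and the nature of its two boundaries, exactly as indicated by Figure~\ref{Figbubbasymp}. I would fix a bubble occupying vertices $[a+1,a+p]$ and split into three families: a left-boundary bubble ($a=0$, bounded by the diagram start and a short chord at $(p+1,p+2)$), a right-boundary bubble (its mirror image), and interior bubbles ($2\le a\le 2n-p-2$, bounded by short chords at $(a-1,a)$ and $(a+p+1,a+p+2)$). Because the bubble's location determines $a$ and distinct bubbles of a single diagram are counted separately, summing over $a$ and over all completions of the diagram reproduces $B_{n,p}$ with no double counting.

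For each placement I would fix the one or two flanking short chords and leave the rest of the diagram free. Fixing the flanking chords removes $2$ vertices (boundary) or $4$ vertices (interior), leaving $(2n-3)!!$ resp.\ $(2n-5)!!$ matchings. The requirement that the bubble contain no short chord is then imposed by inclusion--exclusion over short chords placed \emph{inside} it --- this is the promised summation over short chords, with positions $i_j$. Concretely, the surviving fraction is $\sum_{j\ge0}(-1)^j\binom{p-j}{j}(2M-2j-1)!!/(2M-1)!!$, where $2M$ is the number of free vertices and $\binom{p-j}{j}$ is the number of $j$-edge matchings on the length-$p$ path, i.e.\ a coefficient of its matching polynomial (Section~2).

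The asymptotic input is the evaluation of this alternating sum with $p=x(n-1)$ as $n\to\infty$. Using $(2M-2j-1)!!/(2M-1)!!\sim(2M)^{-j}\sim(2n)^{-j}$ and $\binom{p-j}{j}\sim p^j/j!\sim(xn)^j/j!$, the $j$-th term tends to $(-1)^j(x/2)^j/j!$, so the fraction converges to $e^{-x/2}$ for both boundary and interior placements. Collecting the three families then gives $B_{n,p}\sim(2n-5)!!\,e^{-x/2}\bigl[\,2(2n-3)+(2n-p-3)\,\bigr]=(2n-5)!!\,e^{-x/2}(6n-p-9)$, and since $(2n-5)!!\sim(2n-1)!!/(4n^2)$ and $p=x(n-1)$ this is $\sim\frac{6-x}{4n}(2n-1)!!\,e^{-x/2}$. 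To normalize I would use Equation~(\ref{EqPoisOne}), $\sum_{p=1}^{2n}B_{n,p}\sim2(2n-1)!!$, together with $B_{n,2n}=d_{n,0}\sim e^{-1}(2n-1)!!$ and $B_{n,2n-1}=0$, to obtain $\sum_{p=1}^{2n-2}B_{n,p}\sim(2-e^{-1})(2n-1)!!$. Multiplying by $(n-1)$, dividing, and letting $n\to\infty$ yields $\rho(x)=\frac{(6-x)e^{-x/2}}{4(2-e^{-1})}$, which equals $\frac{e(6-x)}{2(4e-2)}e^{-x/2}$ after clearing the factor $e^{-1}$.

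The hard part will be the passage to the limit inside the alternating inclusion--exclusion sum: the terms alternate in sign, so the termwise limit must be justified by a dominated-convergence bound on $\binom{p-j}{j}(2M-2j-1)!!/(2M-1)!!$ that is uniform in $j$, controlling the tail so that the sum genuinely converges to $e^{-x/2}$ rather than merely formally. A secondary bookkeeping difficulty is ensuring the three boundary families combine with the correct relative weights --- that the two end-bubbles carry the larger prefactor $(2n-3)!!$ while each of the $\sim(2-x)n$ interior positions carries $(2n-5)!!$ --- since it is exactly the interplay $2(2n-3)+(2n-p-3)=6n-p-9$ that produces the linear factor $6-x$. One could alternatively try to extract this asymptotic by a coalescing-saddle analysis of the generating function in Corollary~\ref{CorGF}, but the direct combinatorial route above is cleaner and makes the origin of each term transparent.
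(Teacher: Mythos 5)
Your argument is correct, and it reaches the stated limit by a genuinely different route from the paper. The paper conditions on the total number $k$ of short chords in the diagram: it starts from a short-chord-free core on $2(n-k)$ vertices, inserts $k$ short chords, and sums over $k$, so that both the factor $e^{-x/2}$ and the factor $(6-x)$ emerge together from the series $\sum_k \frac{k+1}{2^{k-1}(k-1)!}(2-x)^{k-1}$; this leans on several heuristic asymptotic inputs (the Poisson law for short chords, the rarity of nested short chords, and uniformity of chord positions). You instead fix the bubble's location and its one or two bounding short chords and impose the no-internal-short-chord condition by a local inclusion--exclusion over matchings of the internal path, so that $e^{-x/2}$ arises from the alternating sum $\sum_j(-1)^j\binom{p-j}{j}(2M-2j-1)!!/(2M-1)!!$ while $(6-x)$ arises separately from the position count $2(2n-3)+(2n-p-3)$. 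A real advantage of your decomposition is that it is \emph{exact} before any asymptotics: for $1\le p\le 2n-2$ one has $B_{n,p}=2\sum_j(-1)^j\binom{p-j}{j}(2n-2j-3)!!+(2n-p-3)\sum_j(-1)^j\binom{p-j}{j}(2n-2j-5)!!$, which I checked reproduces the small-$n$ entries of Table~\ref{TabBnp} and the identity $B_{n,2n-2}=2d_{n-1,0}$; the only analytic work left is the termwise passage to the limit in the alternating sum, which you correctly flag (a crude bound such as $\binom{p-j}{j}\le p^j/j!$ together with $(2M-2j-1)!!/(2M-1)!!\le (2M-2j)^{-j}$ suffices for $x$ bounded away from $2$, with a little more care needed near $x=2$). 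The paper's route is shorter and makes the connection to the Poisson statistics of short chords transparent, but is no more rigorous on the interchange-of-limits point; yours trades that transparency for an exact finite-$n$ formula and a cleaner accounting of where each factor in $\rho(x)$ originates.
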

\begin{proof}
We begin by noting that as $n$ is taken to infinity it is rare,
amongst all linear chord diagrams, for a short chord to be found
nested directly inside of another chord. Specifically, if
$i-1,i,i+1,i+2$ are the positions of consecutive vertices, and $i,i+1$
are joined by a short chord, then it is unlikely that $i-1,i+2$ will
also be joined by a chord. To see this consider the diagrams with
exactly one short chord on $2n-2$ vertices. If one then inserts an
additional short chord into one of these diagrams (to thus produce a
diagram on $2n$ vertices), then there are $2n-1$ possible positions
for it, only one of which will produce the nested configuration.

To leading order we may therefore consider a bubble to be constructed
by the existence of some short chords (at least one). We then consider
the vertices not participating in these short chords to constitute a
(shorter) diagram devoid of short chords. The number $Z_n$ of diagrams
on $2n$ vertices devoid of short chords is given asymptotically,
according to the Poisson distribution, by $Z_n\simeq (2n-1)!!
\,e^{-1}$, and thus $Z_n \simeq (2n - 1) Z_{n-1}$.

Starting with a diagram devoid of short chords on $2(n-k)$ vertices,
where $k$ is order 1, we add $k$ short chords to produce a diagram on
$2n$ vertices. We may use one of these short chords (in conjunction
with the end of the diagram), or two of them, to bound a bubble of
size $p$ (which we take to be order $n$), see Figure
\ref{Figbubbasymp}. In the former case there are $k-1$
indistinguishable short chords whose positions must be summed over. In
the latter, the complex consisting of the bubble and its two bounding
chords must have its position summed over, in addition to the
remaining $k-2$ indistinguishable chords. We therefore have the
following asymptotic count for the number of bubbles of size $p$:

\begin{align}\nonumber
  B_{n,p}&\simeq\sum_k Z_{n-k} \left( 2\sum_{i_1<\cdots<i_{k-1}=1}^{2(n-k)-p-1}
1+ \sum_l\sum_{i_2<\cdots<i_{l-1}<i_1 <i_l\cdots<i_{k-1}=1}^{2(n-k)-p-1}1
\right)\\\nonumber
  &\simeq \frac{Z_n}{2n-1}\sum_k\frac{1}{(2n-{\cal O}(1))^{k-1}}\biggl(
  2\frac{(2n-p-{\cal
      O}(1))^{k-1}}{(k-1)!}
  +\frac{(2n-p-{\cal O}(1))^{k-1}}{(k-2)!}\biggr) \\\nonumber
  &\simeq \frac{Z_n}{2n-1}\sum_k\frac{k+1}{2^{k-1}(k-1)!}(2-x)^{k-1},
\end{align}
where we have used the relation $Z_n \simeq (2n - 1) Z_{n-1}$ once,
and then $k-1$ more times to express $Z_{n-k}$ in terms of $Z_n$. We
recall from Equation (\ref{asympmean}) that
\begin{equation}\nonumber
\sum_{p=1}^{2n-2} B_{n,p} \simeq (2n-1)!! \left(2 - e^{-1}\right),
\end{equation}
and using $Z_n\simeq (2n-1)!!\,e^{-1}$, we therefore find
\begin{equation}\nonumber
  \lim_{n\to\infty} \frac{(n-1)B_{n,x(n-1)}}{\sum_{p=1}^{2n-2} B_{n,p}}
  = \frac{1}{4e-2}\sum_k\frac{k+1}{2^{k-1}(k-1)!}(2-x)^{k-1}
= \frac{e\,(6-x)}{2\,(4 e -2)}\,e^{-x/2},
\end{equation}
where we have summed over $k$ from 1 to infinity.
\end{proof}
It is trivial to verify that the mean of $\rho(x)$ gives the result
from Equation (\ref{asympmean}). In Figure \ref{Figdist} we have
plotted the exact data for $n=150$ against $\rho(x)$.

\begin{figure}[H]
\begin{center}
 \includegraphics[width=4.in]{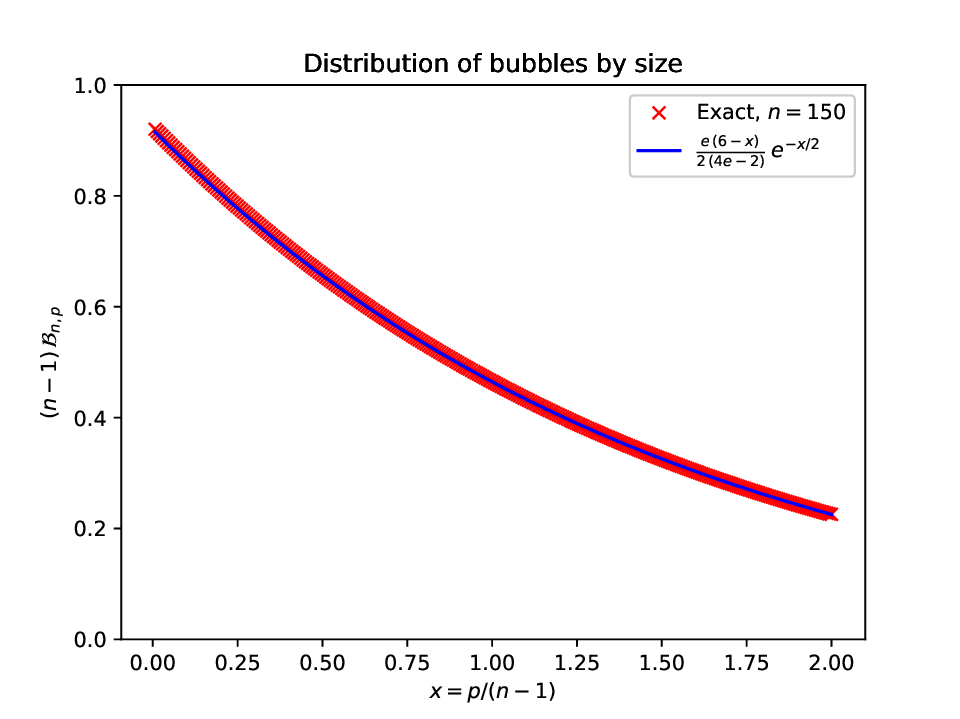}
\end{center}
\caption{A plot displaying $(n-1){\cal B}_{n,p} = (n-1)B_{n,p}/\sum_p
  B_{n,p}$ for $n=150$ and for $p=1$ to $2n-2$ in red ``X'''s, and the
  asymptotic distribution $\rho(x)$ as a blue solid curve.}
\label{Figdist}
\end{figure}

\section{Open questions and further research}

There are several directions in which the study of bubbles could be
extended. An immediate question is whether the counts in Lemmas
\ref{LemTot} and \ref{LemFrst} can be argued more directly from
combinatorial arguments. To give a sense of the difficulties present
here, it is instructive to consider the total number of bubbles,
i.e., the result of Lemma \ref{LemTot}. One way of estimating the
number of bubbles is to assume bubbles are bounded by single short
chords (and the ends of the diagram). This means we are discounting
configurations where short chords are adjacent to one another or found
at the ends of the diagram. By counting the possible positions of a
single short chord, and allowing the remaining $2n-2$ vertices to be
connected in every possible manner, i.e.,
\begin{equation}\nonumber
(2n-1) (2(n-1)-1)!!,
\end{equation}
one is counting the diagrams with $s$ short chords $s$ times, and
hence, according to the estimation, diagrams with $s+1$ bubbles $s$
times. We are thereby under-counting the number of bubbles by exactly
the number of linear chord diagrams, i.e., $(2n-1)!!$. Adding these
back we therefore find
\begin{equation}\nonumber
  \sum_{p=1}^{2n} B_{n,p} \simeq (2n-1) (2(n-1)-1)!! + (2n-1)!! =
  \frac{(2n-2)!(4n-2)}{2^{n-1}(n-1)!}.
\end{equation}
This is a $1+{\cal O}(1/n)$ multiplicative correction away from the
exact result given in Lemma \ref{LemTot}. The inclusion of the
configurations where short chords are adjacent seems unwieldy, and the
author has not found an elegant combinatorial argument to account for
them.

Another direction of future research is to refine the counting of
bubbles by the number of {\it internal} chords, i.e., chords with both
their endpoints found within the bubble. The author \cite{DY4} hopes
to report his findings with respect to this problem shortly. Those
chords which are not internal must bridge two bubbles; the
connectivity of a given bubble to others in the same diagram becomes a
natural further question.

Finally, the concept of a short chord extends readily to graphs other
than the path of length $2n$, cf.\ Young \cite{DY1}\cite{DY2}, and
it would be interesting to count bubbles on two (or higher)
dimensional grids or other more general graphs.

\section{Acknowledgment}

The author thanks the anonymous referee for their helpful comments and
suggestions for improving the paper.

\bigskip
\hrule
\bigskip

\noindent {\it 2010 Mathematics Subject Classification:} 
Primary 05A15; Secondary 05C70, 60C05. 

\noindent \emph{Keywords:} 
chord diagram, perfect matching. 
\bigskip
\hrule
\bigskip

\noindent (Concerned with sequences 
\seqnum{A278990}, \seqnum{A079267}, \seqnum{A367000}.)

\end{document}